\documentclass[12pt]{article}

\usepackage{amsthm}
\usepackage{amsmath}
\usepackage{mathrsfs}
\usepackage{stmaryrd}
\usepackage{amssymb}
\usepackage{diagbox}
\usepackage{tikz-cd}
\usepackage{amsfonts}
\usepackage{mathtools}
\usepackage{authblk}
\usepackage{bm}
\usepackage[utf8]{inputenc}
\usepackage{indentfirst}
\usepackage{graphicx}
\usepackage{enumerate}
\usepackage[top=2.54cm, bottom=2.54cm, left=3.28cm, right=3.28cm]{
geometry}
\usepackage{verbatim}
\usepackage[colorlinks,linkcolor=blue]{hyperref}

\newtheorem{thm}{Theorem}
\theoremstyle{definition}
\newtheorem{defn}{Definition}

\newtheorem{prop}{Proposition}
\newtheorem{conj}{Conjecture}
\newtheorem{cor}{Corollary}

\newtheorem{remark}{Remark}

\makeatletter
\newcommand{\proofpart}[2]{
  \par
  \addvspace{\medskipamount}
  \noindent\textbf{Part #1. #2}
  \par\nobreak\smallskip
  \@afterheading
}
\makeatother

\begin{document}

\title{On a conjecture of Nikiforov concerning the minimal $p$-energy of connected graphs}
\author{\small Quanyu Tang$^{\rm a}$\thanks{Email: tang\_quanyu@163.com}\quad\quad Yinchen Liu$^{\rm b}$\thanks{Email: liuyinch23@mails.tsinghua.edu.cn} \quad\quad Wei Wang$^{\rm a}$\thanks{Email: wang\_weiw@xjtu.edu.cn}
\\
{\footnotesize$^{\rm a}$School of Mathematics and Statistics, Xi'an Jiaotong University, Xi'an 710049, China}
\\
{\footnotesize$^{\rm b}$Institute for Interdisciplinary Information Sciences, Tsinghua University, Beijing 100084, China}
}

\date{}
	\maketitle
\begin{abstract}
For a given simple graph \( G \), the \( p \)-energy of \( G \), denoted by \( \mathcal{E}_p(G) \), is defined as the sum of the \( p \)-th power of the absolute values of the eigenvalues of its adjacency matrix. Let \( S_n \) denote the star graph with one internal node and \( n-1 \) leaves. Nikiforov conjectured that for \( 1 < p < 2 \), the connected graph of order \( n \) with the smallest \( p \)-energy is \( S_n \). Recently, this conjecture was proved for bipartite graphs. In this paper, by employing a Coulson-Jacobs-type formula and certain spectral radius results for connected graphs, we completely resolve this conjecture. Furthermore, we establish that the equality condition in the inequality
\(
\mathcal{E}_p(G) \geq \mathcal{E}_p(S_n)
\)
holds if and only if \( G \) is \( S_n \).
\end{abstract}

\noindent\textbf{Keywords:} Spectral graph theory; Graph energy; $p$-energy; Schatten norm; Connected graphs\\
\noindent\textbf{Mathematics Subject Classification:} 05C50

\section{Introduction}
 We start with some definitions and notations. Throughout this paper, we only consider \emph{simple} graphs, i.e., undirected graphs without loops or multiple edges. Let $G = (V, E)$ be a graph of order $n$ and size $m$. The \textit{adjacency matrix} of $G$ is an $n \times n$ matrix $A(G) = [a_{ij}]$, where $a_{ij} = 1$ if the vertices $v_i$ and $v_j$ are adjacent and $a_{ij} = 0$, otherwise. The \textit{eigenvalues} of $G$ are the eigenvalues of $A(G)$. Since $A(G)$ is a real symmetric matrix, all the eigenvalues of $A(G)$ are real. Let \( \lambda_1, \dots, \lambda_n \) be the eigenvalues of \( G \), collectively known as the \emph{spectrum} of \( G \). Then the \textit{$p$-energy} of $G$ is defined as the sum of the \( p \)th power of the absolute values of the eigenvalues of its adjacency matrix
$$\mathcal{E}_p(G)=\sum^n_{i=1}|\lambda_i|^p,$$
where $p$ is a positive real number. When \( p = 1 \), \( \mathcal{E}_1(G) \) is simply denoted as \( \mathcal{E}(G) \), known as the \textit{energy} of \( G \). Originating from applications in molecular chemistry, graph energy has attracted considerable attention. We refer the reader to \cite{Gutman2001} for classical bounds on graph energy. The \( p \)-energy is also known as the \textit{\( p \)-Schatten energy} in other works, such as \cite{Arizmendi2023b}. In this paper, we follow the terminology used by Akbari et al. \cite{Akbari2020}.

In Nikiforov \cite{Nikiforov2012}, the Schatten $p$-norms of graphs were studied.
\begin{defn}
Let $A$ be an $n \times n$ complex matrix, and let $p \geq 1$. The \emph{Schatten $p$-norm} $\|A\|_{p}$ is given by
\[
\|A\|_{p} := \left( \sigma_1(A)^p + \cdots + \sigma_n(A)^p \right)^{1/p},
\]
where $\sigma_1(A) \geq \sigma_2(A) \geq \dots \geq \sigma_n(A)$ are the singular values of $A$.
\end{defn}

If $G$ is a graph with adjacency matrix $A$, for short we write $\|G\|_{p}$ for $\|A\|_{p}$. Clearly, $\|G\|_{1} = \mathcal{E}(G)$, and the $p$-energy $\mathcal{E}_p(G)$ of a graph $G$ is exactly $\|G\|_{p}^p$.

Let \( P_n \) denote the path graph on \( n \) vertices, and let \( S_n \) denote the star graph with one internal node and \( n-1 \) leaves. It is known that the path graph has the maximal energy among all trees of a given order, while the star graph has the minimal energy among all connected graphs of a given order.

For the \( p \)-energy, there is a distinct difference in behavior between \( 1 < p < 2 \) and \( p > 2 \). This was first noted by Li, Shi, and Gutman \cite{Li2012} as a private communication from Wagner and was later revisited by Nikiforov \cite{Nikiforov2016}, where it was formulated in terms of Schatten \( p \)-norms. Here, we reinterpret it in the language of \( p \)-energy.

\begin{conj}[{\cite{Nikiforov2016}}]\label{conj1}
Let \( G\) be a connected graph of order \(n\).
\begin{itemize}
    \item[(a)] If \( p > 2 \), then \( \mathcal{E}_p(G) \geq \mathcal{E}_p(P_n) \).
    \item[(b)] If \( 1 < p < 2 \), then \( \mathcal{E}_p(G) \geq \mathcal{E}_p(S_n)\).
\end{itemize}
\end{conj}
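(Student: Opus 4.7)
The plan is to exploit an integral representation of the $p$-energy in the spirit of the classical Coulson formula, reducing the desired inequality to a coefficient-wise comparison of characteristic polynomials that can be handled by a spectral radius bound of Hong. Specifically, starting from the scalar identity $|x|^p = c_p\int_0^\infty x^2 t^{p-1}/(x^2+t^2)\,dt$ with $c_p := 2\sin(\pi p/2)/\pi > 0$ (valid for $1<p<2$), summing over the eigenvalues of $A(G)$ and using $\sum_i 1/(\lambda_i^2+t^2) = t^{-1}\tfrac{d}{dt}\log|\Phi_G(it)|$ (with $\Phi_G$ the characteristic polynomial of $G$) yields the Coulson-Jacobs-type formula
\[
\mathcal{E}_p(G) \;=\; c_p\int_0^\infty t^{p-1}\Bigl(n - t\,\tfrac{d}{dt}\log|\Phi_G(it)|\Bigr)dt.
\]
For $S_n$, from $\Phi_{S_n}(x)=x^{n-2}(x^2-(n-1))$ one has $|\Phi_{S_n}(it)|^2 = t^{2(n-2)}(t^2+n-1)^2$, and the formula recovers $\mathcal{E}_p(S_n)=2(n-1)^{p/2}$. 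An integration by parts on the difference $\mathcal{E}_p(G)-\mathcal{E}_p(S_n)$ (whose boundary terms vanish because $|\Phi_G(it)|/|\Phi_{S_n}(it)|$ behaves as a power of $t$ near $0$ and tends to $1$ as $t\to\infty$) gives
\[
\mathcal{E}_p(G)-\mathcal{E}_p(S_n) \;=\; c_p\,p\int_0^\infty t^{p-1}\log\frac{|\Phi_G(it)|}{|\Phi_{S_n}(it)|}\,dt,
\]
so it suffices to show $|\Phi_G(it)|^2 \geq |\Phi_{S_n}(it)|^2$ for every connected $G$ of order $n$ and every $t\geq 0$.

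The second stage is a coefficient-wise comparison of these two polynomials in $t^2$. Writing $|\Phi_G(it)|^2=\det(A(G)^2+t^2 I)=\sum_{k=0}^n e_{n-k}(\mu_1,\ldots,\mu_n)\,t^{2k}$ with $\mu_i=\lambda_i^2$, and $|\Phi_{S_n}(it)|^2=t^{2n}+2(n-1)t^{2(n-1)}+(n-1)^2 t^{2(n-2)}$, the task reduces to three claims: $e_1(\mu)=2m\geq 2(n-1)$, immediate from connectedness; $e_{n-k}(\mu)\geq 0$ for $k\leq n-3$, trivial; and the key inequality $e_2(\mu)\geq(n-1)^2$. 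Using the identity $e_2(\mu)=2m^2-\tfrac12\sum_i\lambda_i^4$, this last one is equivalent to $\sum_i\lambda_i^4 \leq 4m^2-2(n-1)^2$. Here the spectral radius bound enters: combining the elementary estimate $\sum_i\lambda_i^4 \leq \rho(G)^2\sum_i\lambda_i^2 = 2m\rho(G)^2$ with Hong's inequality $\rho(G)^2\leq 2m-n+1$ (valid for connected graphs with minimum degree $\geq 1$), one obtains
\[
\sum_i\lambda_i^4 \;\leq\; 2m(2m-n+1) \;=\; 4m^2-2m(n-1) \;\leq\; 4m^2-2(n-1)^2,
\]
the last step again using $m\geq n-1$.

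Finally, the equality case follows from tracking the chain: $\mathcal{E}_p(G)=\mathcal{E}_p(S_n)$ forces $|\Phi_G(it)|^2\equiv|\Phi_{S_n}(it)|^2$ as polynomials in $t^2$, whence $m=n-1$ (so $G$ is a tree) and the multiset $\{\lambda_i^2\}=\{n-1,n-1,0,\ldots,0\}$; together with connectedness this identifies $G$ as $K_{1,n-1}=S_n$. The main potential obstacle I anticipate is that Hong's bound must be precisely the spectral radius estimate invoked: it saturates at exactly $S_n$ and $K_n$, and any weaker estimate would fail to yield $\sum_i\lambda_i^4 \leq 4m^2-2(n-1)^2$ uniformly over all connected $G$, which is what makes this approach work in one stroke for both the bipartite and the non-bipartite regime.
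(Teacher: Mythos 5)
Your proposal is correct and follows essentially the same route as the paper: the same Coulson--Jacobs-type integral representation (you derive it directly from the scalar identity $|x|^p=c_p\int_0^\infty x^2t^{p-1}/(x^2+t^2)\,dt$ rather than quoting Du's formula, but the resulting identity and the integration by parts are identical), the same reduction to $|\Phi_G(it)|\ge|\Phi_{S_n}(it)|$, and the same key estimate $\sum_i\lambda_i^4\le 2m(2m-n+1)$ via Hong's spectral radius bound, including the equality analysis. As in the paper, this settles only part (b) of the conjecture; part (a) is not addressed.
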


In \cite{Csikvari2010}, Csikv\'{a}ri proved that among all connected graphs of a given order, the path graph minimizes the number of closed walks of any given length. Nikiforov \cite{Nikiforov2016} later reformulated this result as follows:

\begin{thm}[\cite{Nikiforov2016}, Proposition 4.49]\label{thmNikiforov2016}
If \( G \) is a connected graph of order \( n \), then \( \mathcal{E}_{2k}(G) \geq \mathcal{E}_{2k}(P_n) \) for every integer \( k \geq 2 \).
\end{thm}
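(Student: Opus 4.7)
The plan is to exploit the elementary identity $\mathcal{E}_{2k}(G) = \sum_i \lambda_i^{2k} = \operatorname{tr}(A(G)^{2k}) = W_{2k}(G)$, where $W_{2k}(G)$ denotes the number of closed walks of length $2k$ in $G$. In this combinatorial guise the theorem becomes the assertion that, among connected graphs of order $n$, the path $P_n$ minimises the number of closed walks of each even length at least $4$. I would prove this in two stages: first reduce to the case of spanning trees, then settle the tree case via a monotone graph transformation due to Csikv\'{a}ri.

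For the reduction, suppose $G$ is connected and not a tree. Then $G$ contains a cycle, and any edge $e$ lying on that cycle is non-separating, so $G-e$ remains connected and still has $n$ vertices. Since every closed walk of length $2k$ in $G-e$ is also a closed walk in $G$, we immediately obtain $W_{2k}(G) \geq W_{2k}(G-e)$. Iterating this edge-deletion argument removes the excess edges one by one down to a spanning tree $T$ of $G$, giving $\mathcal{E}_{2k}(G) \geq \mathcal{E}_{2k}(T)$. It therefore suffices to establish $\mathcal{E}_{2k}(T) \geq \mathcal{E}_{2k}(P_n)$ for every tree $T$ of order $n$.

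For the tree case I would invoke Csikv\'{a}ri's \emph{generalised tree shift} from \cite{Csikvari2010}: given a tree $T$ together with a suitable pair of vertices, one reattaches a branch so as to produce a new tree $T'$ of the same order that is ``closer to the path''. Csikv\'{a}ri shows that this local operation satisfies $W_\ell(T) \geq W_\ell(T')$ for every $\ell \geq 0$, and that iterating it on any tree of order $n$ eventually terminates at $P_n$. Applying the resulting chain of inequalities to a spanning tree of $G$ then yields $\mathcal{E}_{2k}(G) \geq \mathcal{E}_{2k}(P_n)$, completing the proof.

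The main obstacle is the monotonicity of the generalised tree shift, which is where all the combinatorial content resides. The most transparent strategy is to exhibit, for each $\ell$, an injection from the set of closed walks of length $\ell$ in $T'$ into the corresponding set in $T$ that leaves the walk unchanged outside the pivot region and re-routes it locally within the shifted branches; this requires a careful case analysis based on how a walk enters, traverses, and exits the affected subtrees. An alternative spectral route is to compare the characteristic polynomials of $T$ and $T'$ via the deletion recurrence $\phi(G,x) = x\phi(G-v,x) - \sum_{u \sim v}\phi(G-u-v,x)$ valid for trees, and to derive a matching-polynomial inequality strong enough to imply the $2k$-th moment bound for every $k \geq 2$.
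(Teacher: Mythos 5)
Your proposal is correct and follows essentially the same route the paper relies on: it identifies $\mathcal{E}_{2k}(G)$ with the closed-walk count $\operatorname{tr}(A(G)^{2k})$, reduces to a spanning tree by deleting cycle edges, and defers the tree case to Csikv\'{a}ri's generalized tree shift in \cite{Csikvari2010} — exactly the result the paper cites (via Nikiforov's reformulation) for this theorem. The only part left unproved, the monotonicity of the tree shift, is legitimately treated as a citation here just as in the paper, so no gap remains.
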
 Thus, Theorem \ref{thmNikiforov2016} proves Conjecture \ref{conj1} (a) for every positive even integer \( p \). Recently, Arizmendi and Arizmendi \cite{Arizmendi2023} resolved Conjecture \ref{conj1} (b) in the case where \( G \) is a bipartite graph:

\begin{thm}[\cite{Arizmendi2023}, Proposition 4.7 (i)]\label{thmArizmendi2023}
Let $G$ be a bipartite graph of order $n$ and size $m$. If \(1 \leq p \leq 2\), then \( \mathcal{E}_p(G) \geq 2(m)^{p/2}\).
\end{thm}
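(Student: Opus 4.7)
The plan is to combine the bipartite symmetry of the spectrum with the classical monotonicity of $\ell^p$-norms on a finite index set; these two ingredients already force the inequality, and no further spectral-theoretic machinery is required.

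First, I would exploit that $G$ being bipartite forces the spectrum of $A(G)$ to be symmetric about $0$: if $\lambda$ is an eigenvalue with multiplicity $r$, then so is $-\lambda$. Letting $\mu_1 \geq \mu_2 \geq \cdots \geq \mu_k > 0$ denote the positive eigenvalues (with multiplicity), this rewrites the energy as
$$\mathcal{E}_p(G) \;=\; \sum_{i=1}^{n}|\lambda_i|^p \;=\; 2\sum_{i=1}^{k}\mu_i^{p}.$$
At the same time, the standard identity $\operatorname{tr}(A(G)^2)=2m$ yields $\sum_{i=1}^{n}\lambda_i^{2}=2m$, and therefore $\sum_{i=1}^{k}\mu_i^{2}=m$.

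The second ingredient is the $\ell^p$-norm monotonicity inequality on a finite index set: for any nonnegative vector $x\in\mbr^{k}$ and any $0<p\le q$,
$$\Bigl(\sum_{i} x_i^{q}\Bigr)^{1/q} \;\le\; \Bigl(\sum_{i} x_i^{p}\Bigr)^{1/p}.$$
This is a one-line consequence of $\|x\|_\infty \le \|x\|_p$ combined with the factorisation $\sum_i x_i^{q} = \sum_i x_i^{p}\cdot x_i^{q-p}\le \|x\|_\infty^{q-p}\sum_i x_i^{p} \le \|x\|_p^{q}$. Applying it with $q=2$, $p\in[1,2]$, and $x_i=\mu_i$ gives
$$m^{p/2}\;=\;\Bigl(\sum_{i=1}^{k}\mu_i^{2}\Bigr)^{p/2} \;\le\; \sum_{i=1}^{k}\mu_i^{p},$$
so that $\mathcal{E}_p(G)=2\sum_{i=1}^{k}\mu_i^{p}\ge 2m^{p/2}$, as claimed.

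The proof is short enough that there is no substantive obstacle. The only point demanding a moment of care is the \emph{direction} of the $\ell^p$-norm monotonicity on a finite index set — smaller exponent produces the larger norm, opposite to the containment familiar from infinite-measure spaces. Once that is correctly oriented, the bound follows by mechanically feeding the two elementary identities $\mathcal{E}_p(G)=2\sum\mu_i^{p}$ and $\sum\mu_i^{2}=m$ into the inequality. (As a bonus, the equality case in the $\ell^p$ monotonicity forces all but one $\mu_i$ to vanish, which would pinpoint the extremal graphs should one wish to discuss sharpness.)
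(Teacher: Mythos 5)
Your argument is correct: the bipartite symmetry of the spectrum gives $\mathcal{E}_p(G)=2\sum_{i=1}^{k}\mu_i^{p}$ with $\sum_{i=1}^{k}\mu_i^{2}=m$, and the monotonicity $\|x\|_2\le\|x\|_p$ for $p\le 2$ (counting measure) then yields $\sum_i\mu_i^p\ge m^{p/2}$; all steps, including the degenerate case $m=0$ and the orientation of the norm inequality, check out, and in fact only $0<p\le 2$ is used, not $p\ge 1$. Note, however, that the paper you were given does not prove this statement at all: it is imported verbatim from the reference [Arizmendi--Arizmendi 2023, Proposition 4.7(i)] and used as a black box, so there is no in-paper proof to compare against. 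Your derivation is the natural self-contained one, and it isolates exactly where bipartiteness matters: pairing the eigenvalues lets you apply the $\ell^p$-monotonicity to half the spectrum and double, which gives the stronger constant $2m^{p/2}$ rather than the bound $(2m)^{p/2}$ that Schatten-norm monotonicity $\|G\|_2\le\|G\|_p$ would give for an arbitrary graph. Your parenthetical equality remark is also consistent: equality forces a single positive eigenvalue $\sqrt{m}$ (together with its mirror $-\sqrt{m}$), i.e.\ a complete bipartite graph plus isolated vertices, which is in harmony with the star being the extremal connected graph in the paper's main theorem.
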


In this paper, we establish the following theorem, which constitutes the main result of this paper.

\begin{thm}\label{mainthm}
Let \( G \) be a connected graph of order \( n \), and let \( S_n \) be the star graph. If \( 0 < p < 2 \), then
\[
\mathcal{E}_p(G) \geq \mathcal{E}_p(S_n),
\]
with equality holds if and only if \( G \) is the star \( S_n \).
\end{thm}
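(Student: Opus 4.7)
The plan is to combine a Coulson--Jacobs-type integral representation of $\mce_p$ valid on $0<p<2$ with a coefficient-wise comparison of characteristic polynomials. The starting identity is
\[
|\lambda|^p \;=\; \frac{p\sin(\pi p/2)}{\pi}\int_0^\infty t^{p-1}\log\!\left(1+\frac{\lambda^2}{t^2}\right) dt \qquad (0<p<2),
\]
obtainable via the substitution $u=\lambda^2/t^2$ together with the reflection formula $\Gamma(s)\Gamma(1-s)=\pi/\sin(\pi s)$. Summing over the spectrum of $A=A(G)$ converts this into
\[
\mce_p(G) \;=\; C_p\int_0^\infty t^{p-1}\log\det\!\Bigl(I+\frac{A^2}{t^2}\Bigr) dt \;=\; C_p\int_0^\infty t^{p-1}\log\frac{|\phi_G(it)|^2}{t^{2n}}\, dt,
\]
where $C_p:=p\sin(\pi p/2)/\pi>0$ is independent of $G$ and $\phi_G(x)=\det(xI-A)$. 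Since $|\phi_{S_n}(it)|^2 = t^{2(n-2)}(t^2+n-1)^2$, the theorem reduces to the pointwise bound
\[
\prod_{j=1}^n \bigl(t^2+\lambda_j(G)^2\bigr) \;\geq\; t^{2(n-2)}(t^2+n-1)^2 \qquad \text{for all } t>0,
\]
with strict inequality somewhere when $G\neq S_n$.

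I would verify this by expanding both sides in powers of $t^2$. Writing the left-hand side as $\sum_{k=0}^n e_k(\lambda_1^2,\dots,\lambda_n^2)\,t^{2(n-k)}$, the right-hand side only contributes at $k=0,1,2$ with coefficients $1$, $2(n-1)$, $(n-1)^2$. The inequality therefore amounts to: $e_1=2m\geq 2(n-1)$ (immediate from connectedness), $e_k\geq 0$ for $k\geq 3$ (trivial), and the delicate
\[
e_2 \;=\; \sum_{i<j}\lambda_i^2\lambda_j^2 \;=\; 2m^2 - \tfrac{1}{2}\operatorname{tr}(A^4) \;\geq\; (n-1)^2,
\]
equivalently $\operatorname{tr}(A^4) \leq 4m^2 - 2(n-1)^2$. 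This is the main obstacle and the place where the promised spectral-radius input enters.

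To prove this last inequality I would combine the trivial eigenvalue estimate $\operatorname{tr}(A^4)=\sum_j\lambda_j^4\leq \lambda_1^2\sum_j\lambda_j^2 = 2m\lambda_1^2$ with Hong's classical upper bound $\lambda_1^2\leq 2m-n+1$ for connected graphs, whose equality cases are exactly $S_n$ and $K_n$. Multiplying and invoking $m\geq n-1$ yields
\[
\operatorname{tr}(A^4) \;\leq\; 2m(2m-n+1) \;=\; 4m^2-2m(n-1) \;\leq\; 4m^2-2(n-1)^2,
\]
as required. For the equality statement, the polynomial difference $\prod_j(t^2+\lambda_j^2)-t^{2(n-2)}(t^2+n-1)^2$ has nonnegative coefficients in $t^2$ and therefore vanishes on $t>0$ only when every coefficient vanishes; this simultaneously forces $m=n-1$, $\operatorname{rank}(A)\leq 2$, and $\lambda_1^2=n-1$, which among connected graphs of order $n$ pins down $G=K_{1,n-1}=S_n$. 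For any other connected $G$, the polynomial difference is strictly positive for every $t>0$, so the integrated inequality is also strict and $\mce_p(G)>\mce_p(S_n)$.
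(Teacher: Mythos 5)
Your proof is correct, and its core is the same as the paper's: reduce the claim, via an integral representation of $\mathcal{E}_p$ with a positive weight $z^{p-1}$, to the pointwise bound $\prod_j(t^2+\lambda_j^2)\ge t^{2(n-2)}(t^2+n-1)^2$, and prove that bound through $e_1=2m\ge 2(n-1)$ together with $e_2\ge (n-1)^2$, the latter obtained from $\operatorname{tr}(A^4)\le 2m\lambda_1^2$ and Hong's bound $\lambda_1^2\le 2m-n+1$ (the paper packages this as its Proposition~\ref{new2} and Corollary~\ref{corr}). Where you genuinely differ is in how the integral representation is obtained: the paper quotes Du's Coulson-type formula (Theorem~\ref{Du1}) and then carries out a Coulson--Jacobs-style manipulation (substitution, integration by parts, and an asymptotic analysis of the boundary term using $\operatorname{tr}A=0$), whereas you derive the formula from scratch by summing the scalar identity $|\lambda|^p=\frac{p\sin(\pi p/2)}{\pi}\int_0^\infty t^{p-1}\log\bigl(1+\lambda^2/t^2\bigr)\,dt$ over the spectrum; since every term is nonnegative, Tonelli justifies the interchange and no boundary-term analysis or external citation is needed. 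This is more self-contained and arguably cleaner, at the (mild) cost of not producing the general two-graph Coulson--Jacobs formula the paper records as Proposition~\ref{new1}. Your treatment of equality also differs in organization: the paper funnels it through the equality case of its $\mathcal{E}_4$ bound, while you read off all coefficients ($m=n-1$, $\operatorname{rank}A\le 2$, $\lambda_1^2=n-1$) and identify $S_n$ directly; that last identification deserves one explicit line --- e.g.\ note that $\lambda_1^2=n-1=2m-n+1$ places you in the equality case of Hong's theorem, and $K_n$ is excluded since $m=n-1$, or invoke that a connected graph of adjacency rank $2$ is complete bipartite and count edges. With that sentence added, the argument is complete.
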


Hence, Theorem \ref{mainthm} confirms Conjecture \ref{conj1} (b) and additionally provides the equality condition, which was not mentioned in the original conjecture.

The paper is organized as follows. In Section~\ref{subsec21}, we derive a Coulson-Jacobs-type formula for the $p$-energy of connected graphs, which is more general than the original Coulson integral formula in \cite{Arizmendi2023b}, which applies only to bipartite graphs. In Section~\ref{subsec22}, we obtain sharper upper bounds for the $p$-energy when \( p > 2 \), which is crucial for deriving the key claim in the following section. In Section~\ref{sec3}, we provide the proof of Theorem~\ref{mainthm}. Some conclusions are given in Section~\ref{sec4}.

\section{Preliminaries}\label{sec2}
\subsection{A Coulson-Jacobs-type Formula}\label{subsec21}

In the theory of graph energy, the so-called \textit{Coulson integral formula} (\ref{eq1}) plays an outstanding role. This formula was obtained by Charles Coulson as early as 1940 \cite{Coulson1940} and reads:
\begin{equation}
\mathcal{E}(G) = \frac{1}{\pi} \int_{-\infty}^{+\infty} \left[ n - \frac{ix \, \phi'(G, ix)}{\phi(G, ix)} \right] dx = \frac{1}{\pi} \int_{-\infty}^{+\infty} \left[ n - x \frac{d}{dx} \log \phi(G, ix) \right] dx
\tag{1}\label{eq1}
\end{equation}
where \( G \) is a graph, \( \phi(G, x) \) is the characteristic polynomial of the adjacency matrix of graph \( G \), \( \phi'(G, x) = \frac{d}{dx}\phi(G, x) \) is its first derivative, and \( i = \sqrt{-1} \).

Later, some researchers extended these notations of graph energy to general complex polynomials and defined the general energy of polynomials \cite{Qiao2016}. Let
\[
\phi(z) = \sum_{k=0}^{n} a_k z^{n-k} = a_0 (z - z_1)(z - z_2) \cdots (z - z_n)
\]
be a complex polynomial (all the coefficients are complex numbers) of degree \(n\). For a nonzero real number \(\alpha\), the \textit{general energy} of \(\phi(z)\), denoted by \(E_\alpha(\phi)\), is defined as:
\[
E_\alpha(\phi) = \sum_{z_k \neq 0} |z_k|^\alpha.
\] In particular, when \( \phi(z) \) is the characteristic polynomial of the graph \( G \), we have \( E_\alpha(\phi) = \mathcal{E}_\alpha(G) \).

In the literature \cite{Du2021,Qiao2016,Qiao2017}, extensive research has focused on Coulson-type integral formulas for such polynomials. We now need to use the following formula from \cite{Du2021}:

\begin{thm}[\cite{Du2021}, Theorem 4.1 (i)]\label{Du1}
Let \( \phi(z) \) be a complex polynomial of degree \( n \), whose roots are all real numbers, \( c < n \) the multiplicity of 0 as a root of \( \phi(z) \). If \( 0 < \alpha < 2 \), then
\[
E_{\alpha}(\phi) = \frac{\sin \frac{\alpha \pi}{2}}{\alpha \pi} \int_0^{+\infty} \left( 2n + ix^\frac{1}{\alpha} \left(\frac{\phi'(-ix^\frac{1}{\alpha})}{\phi(-ix^\frac{1}{\alpha})} - \frac{\phi'(ix^\frac{1}{\alpha})}{\phi(ix^\frac{1}{\alpha})} \right) \right)dx.
\]
\end{thm}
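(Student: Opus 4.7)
The plan is to reduce the integral on the right-hand side to a sum of one-variable integrals by using the partial fraction decomposition of $\phi'/\phi$, and then evaluate each such integral via a standard beta-function computation.

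First I would write $\phi(z) = a_0 \prod_{k=1}^n (z - z_k)$ with all $z_k \in \mathbb{R}$, so that $\phi'(z)/\phi(z) = \sum_{k=1}^n 1/(z - z_k)$. Substituting $\pm ix^{1/\alpha}$ and combining the two terms produces, for each $k$, the quantity
\[
ix^{1/\alpha}\left(\frac{1}{-ix^{1/\alpha}-z_k} - \frac{1}{ix^{1/\alpha}-z_k}\right) = \frac{-2x^{2/\alpha}}{x^{2/\alpha}+z_k^2},
\]
using that $z_k$ is real so $(-ix^{1/\alpha}-z_k)(ix^{1/\alpha}-z_k) = x^{2/\alpha}+z_k^2$. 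Adding the $2n = 2\sum_{k=1}^n 1$ term and simplifying shows that the contributions from roots with $z_k = 0$ cancel and the integrand reduces to
\[
2n + ix^{1/\alpha}\!\left(\frac{\phi'(-ix^{1/\alpha})}{\phi(-ix^{1/\alpha})} - \frac{\phi'(ix^{1/\alpha})}{\phi(ix^{1/\alpha})}\right) = \sum_{z_k \neq 0} \frac{2z_k^2}{x^{2/\alpha}+z_k^2}.
\]

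Next I would exchange sum and integral (justified by nonnegativity, or by checking integrability term by term) to reduce the problem to computing $\int_0^\infty \frac{dx}{x^{2/\alpha}+z_k^2}$ for each nonzero $z_k$. Rescaling by $x = |z_k|^\alpha t$ yields $|z_k|^{\alpha-2}\int_0^\infty \frac{dt}{t^{2/\alpha}+1}$, and the substitution $v = t^{2/\alpha}$ converts this to $\tfrac{\alpha}{2}\int_0^\infty \frac{v^{\alpha/2-1}}{1+v}\,dv$. The classical identity $\int_0^\infty v^{s-1}/(1+v)\,dv = \pi/\sin(\pi s)$, valid for $0 < s < 1$, applied with $s = \alpha/2$, gives the value $\tfrac{\alpha\pi}{2\sin(\alpha\pi/2)}$. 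Multiplying by $2z_k^2 \cdot |z_k|^{\alpha-2} = 2|z_k|^\alpha$, summing over $z_k \neq 0$, and then by the prefactor $\sin(\alpha\pi/2)/(\alpha\pi)$ produces exactly $\sum_{z_k\neq 0} |z_k|^\alpha = E_\alpha(\phi)$.

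The only real obstacle is verifying convergence and pinning down why the hypothesis $0 < \alpha < 2$ is necessary and sufficient. Near $x = 0$ the integrand is uniformly bounded by $2n$, so there is no problem there; at $x \to \infty$ the integrand decays like $2\bigl(\sum z_k^2\bigr)x^{-2/\alpha}$, which is integrable precisely when $2/\alpha > 1$, i.e., $\alpha < 2$. This is also the same condition that makes the beta integral $\int_0^\infty v^{\alpha/2-1}/(1+v)\,dv$ converge at infinity, since then $s = \alpha/2 < 1$. The assumption $c < n$ simply ensures that $\phi$ has at least one nonzero root so that $E_\alpha(\phi)$ and the integral are both nontrivial; if $c = n$ both sides equal $0$ trivially.
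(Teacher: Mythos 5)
Your proof is correct. Note first that the paper itself gives no proof of this statement: Theorem~\ref{Du1} is imported verbatim from Du \cite{Du2021}, so there is no internal argument to compare against. Your blind verification is a complete and self-contained derivation: the partial-fraction identity $\phi'(z)/\phi(z)=\sum_{k}1/(z-z_k)$ reduces the integrand, via the correct algebra $(-ix^{1/\alpha}-z_k)(ix^{1/\alpha}-z_k)=x^{2/\alpha}+z_k^2$, to the nonnegative finite sum $\sum_{z_k\neq 0} 2z_k^2/(x^{2/\alpha}+z_k^2)$, with the multiplicity-$c$ zero root cancelling exactly against its share of the $2n$ term; this identity holds pointwise on $(0,+\infty)$ (where $\phi(\pm ix^{1/\alpha})\neq 0$ since all roots are real), so the apparent singularity when $c\geq 1$ never arises. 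The interchange of sum and integral needs no appeal to nonnegativity, since the sum has at most $n$ terms and linearity suffices once each term is integrable, which your tail estimate $O(x^{-2/\alpha})$ and the condition $\alpha<2$ establish. The scaling $x=|z_k|^{\alpha}t$, the substitution $v=t^{2/\alpha}$, and the classical identity $\int_0^{\infty}v^{s-1}(1+v)^{-1}\,dv=\pi/\sin(\pi s)$ with $s=\alpha/2\in(0,1)$ then give each term the value $|z_k|^{\alpha}\,\alpha\pi/\sin(\alpha\pi/2)$, and the prefactor recovers $E_\alpha(\phi)$ exactly; I have checked the constants and they balance. Your closing remarks are also accurate: $0<\alpha<2$ is precisely the convergence condition at infinity, and when $c=n$ both sides vanish (the integrand is identically $2n-2n=0$), so the hypothesis $c<n$ is not actually load-bearing for this identity. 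Methodologically, your route is arguably more elementary than Du's general treatment, which is set up for complex polynomials via Coulson-type contour machinery: because the roots here are assumed real, you can bypass contour integration entirely and reduce everything to the one-dimensional beta integral, which is the cleanest proof available under this hypothesis.
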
 Our aim is to use the integral formula of Theorem \ref{Du1}, to compare the \( p \)-energy of two connected graphs, \( G_1 \) and \( G_2 \). For this purpose, we will need to modify it as done in the classical paper of Coulson and Jacobs \cite{CoulsonJacobs1949}.

\begin{prop}[Coulson-Jacobs-type
formula for the $p$-energy]\label{new1}
Let \( G_1 \) and \( G_2 \) be connected graphs on \( n \) vertices, and \( 0 < p < 2 \). Let \( f(z) \) and \( g(z) \) denote the characteristic polynomials of the adjacency matrices of \( G_1 \) and \( G_2 \), respectively. Then
\[
\mathcal{E}_p(G_1) - \mathcal{E}_p(G_2) = \frac{2p \sin \left( \frac{p\pi}{2} \right)}{\pi} \int_0^{+\infty} z^{p-1} \log \left| \frac{f(iz)}{g(iz)} \right| dz.
\]
\end{prop}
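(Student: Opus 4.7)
The plan is to apply Theorem \ref{Du1} to both $f$ and $g$, subtract, and then massage the integrand until the desired shape appears. Since $G_1$ and $G_2$ share the same order $n$, the two constant $2n$ contributions cancel, giving
\[
\mathcal{E}_p(G_1) - \mathcal{E}_p(G_2) = \frac{\sin(p\pi/2)}{p\pi} \int_0^{+\infty} i x^{1/p} \left[\Delta_f(x^{1/p}) - \Delta_g(x^{1/p})\right] dx,
\]
where $\Delta_h(z) := \frac{h'(-iz)}{h(-iz)} - \frac{h'(iz)}{h(iz)}$. After the substitution $z = x^{1/p}$, $dx = p z^{p-1} dz$, the prefactor becomes $\frac{\sin(p\pi/2)}{\pi}$ and the integrand gains a factor $i z^p [\Delta_f(z) - \Delta_g(z)]$.

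Next, I exploit the fact that the characteristic polynomials $f,g$ have \emph{real} coefficients, so for real $z$ one has $h(-iz) = \overline{h(iz)}$ and $h'(-iz) = \overline{h'(iz)}$. Hence $\Delta_h(z) = -2i \im\!\left(\tfrac{h'(iz)}{h(iz)}\right)$. Writing $\log h(iz) = \log|h(iz)| + i\,\theta_h(z)$ on any branch-continuous interval and differentiating gives $i\,\tfrac{h'(iz)}{h(iz)} = \tfrac{d}{dz}\log|h(iz)| + i\,\theta_h'(z)$, so $\im \tfrac{h'(iz)}{h(iz)} = -\tfrac{d}{dz}\log|h(iz)|$. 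Substituting back yields $i z^p \Delta_h(z) = -2 z^p \tfrac{d}{dz}\log|h(iz)|$, and therefore
\[
\mathcal{E}_p(G_1) - \mathcal{E}_p(G_2) = -\frac{2\sin(p\pi/2)}{\pi} \int_0^{+\infty} z^p \frac{d}{dz}\log\left|\frac{f(iz)}{g(iz)}\right| dz.
\]

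Finally I integrate by parts with $u = z^p$ and $dv = \tfrac{d}{dz}\log|f(iz)/g(iz)|\,dz$. The target formula then follows from the routine identity $\int_0^\infty z^p u'\,dz = [z^p u]_0^\infty - p\int_0^\infty z^{p-1} u\,dz$, provided that the boundary terms vanish. This verification is the main obstacle, and the essential estimates are: as $z \to \infty$, since $f$ and $g$ are monic of degree $n$ with zero trace (so the $z^{n-1}$ coefficient is zero), $f(iz)/g(iz) = 1 + O(z^{-2})$, hence $z^p \log|f(iz)/g(iz)| = O(z^{p-2}) \to 0$ because $p<2$; and as $z \to 0^+$, even if $0$ is an eigenvalue of $G_1$ or $G_2$ with multiplicities $c_f,c_g$ respectively, one has $\log|f(iz)/g(iz)| = (c_f-c_g)\log z + O(1)$, so $z^p$ times this tends to $0$ because $p>0$. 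Collecting the constants yields the stated Coulson--Jacobs-type identity.

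One should also note, en route, that each of the above integrals is absolutely convergent under the hypothesis $0<p<2$, which is why subtracting the two applications of Theorem \ref{Du1} is legitimate; the asymptotic analysis used for the boundary terms is exactly what gives this convergence as well.
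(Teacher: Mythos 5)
Your argument is correct and follows essentially the same route as the paper: apply Theorem \ref{Du1} with the substitution $z=x^{1/p}$, subtract so the $2n$ terms cancel, recognize the integrand as $z^p\frac{d}{dz}\log$ of a ratio, integrate by parts, and kill the boundary terms using the zero-trace expansion at infinity (needing $p<2$) and $z^p\log z\to 0$ at the origin (needing $p>0$). The only cosmetic difference is that you invoke the real-coefficient conjugate symmetry to pass to $\log\left|\frac{f(iz)}{g(iz)}\right|$ before integrating by parts, whereas the paper works with $\log\frac{g(-iz)g(iz)}{f(-iz)f(iz)}$ and takes the modulus (via $f(iz)f(-iz)=|f(iz)|^2$) at the end.
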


\begin{proof}
By performing the substitution \( z = x^{\frac{1}{p}} \), we have \( dx = p z^{p-1} dz \). Applying this to the integral in Theorem \ref{Du1}, we obtain:
\[
\mathcal{E}_p(G_1) = \frac{\sin\left(\frac{p \pi}{2}\right)}{\pi} \int_0^{+\infty} z^{p-1} \left[ 2n + iz \left( \frac{f'(-iz)}{f(-iz)} - \frac{f'(iz)}{f(iz)} \right) \right] dz.
\] Thus, \begin{align*}
\mathcal{E}_p(G_1) - \mathcal{E}_p(G_2) &= \frac{\sin\left(\frac{p \pi}{2}\right)}{\pi}  \int_0^{+\infty} iz^{p} \left[\left(\frac{f'(-iz)}{f(-iz)} - \frac{g'(-iz)}{g(-iz)}\right) - \left(\frac{f'(iz)}{f(iz)} - \frac{g'(iz)}{g(iz)}\right)\right] dz \\&= \frac{\sin\left(\frac{p \pi}{2}\right)}{\pi}  \int_0^{+\infty} z^p \frac{d}{dz} \left[ \log \left( \frac{g(-iz)  g(iz)}{f(-iz)  f(iz)} \right) \right] dz.
\end{align*} We proceed by applying integration by parts: \begin{align*}
\int_0^{+\infty} z^p \frac{d}{dz} \log \left( \frac{g(-iz)  g(iz)}{f(-iz)  f(iz)} \right) dz =& - \int_0^{+\infty} p z^{p-1} \log \left( \frac{g(-iz)  g(iz)}{f(-iz)  f(iz)} \right) dz
\\&+ \left[ z^p \log \left( \frac{g(-iz)  g(iz)}{f(-iz)  f(iz)} \right) \right]_0^\infty.\tag{2}\label{eq2}
\end{align*}
Now, we calculate the last term in (\ref{eq2}). On the one hand,
\[
\lim_{z \to 0^+} z^p \log \left( \frac{g(-iz)g(iz)}{f(-iz)f(iz)} \right) = \lim_{z \to 0^+} \left[ z^p  \log \left( g(-iz)g(iz) \right) - z^p  \log \left( f(-iz)f(iz) \right) \right] = 0
\] since for any \( p > 0 \) and any non-trivial polynomial \( P(z) \), we have the convergence \( z^p \log(P(z)) \to 0 \) as \( z \to 0 \) (which may be proved by an application of L'Hospital's rule).

On the other hand, since \( f \) and \( g \) are the characteristic polynomials of the graphs \( G_1 \) and \( G_2 \), respectively, the sum of their roots is zero because the trace of the adjacency matrix is zero. Hence, we can express them as
\[
f(z) = z^n + a_1 z^{n-2} + o(z^{n-2}), \quad g(z) = z^n + b_1 z^{n-2} + o(z^{n-2}),
\]
as \( z \to \infty \). This implies that
\[
\frac{g(iz)}{f(iz)} = 1 + (a_1 - b_1) z^{-2} + o(z^{-2}).
\] Thus,
\[
\frac{g(-iz)g(iz)}{f(-iz)f(iz)} = 1 + 2(a_1 - b_1) z^{-2} + o(z^{-2}).
\] Therefore, using the approximation $\log(1 + z) = z$ for \(z \to 0\), we see that
\begin{eqnarray*}
z^p  \log \left( \frac{g(-iz)g(iz)}{f(-iz)f(iz)} \right) &=& z^p \log \left( 1 + 2(a_1 - b_1) z^{-2} + o(z^{-2}) \right)
\\&=&  2(a_1 - b_1) z^{p-2} + o(z^{p-2}) ,
\end{eqnarray*} which converges to $0$  as  $z \to \infty$,  for any  $0< p < 2$. Hence,
\begin{equation}
\mathcal{E}_p(G_1) - \mathcal{E}_p(G_2) = \frac{p \sin\left( \frac{p \pi}{2} \right)}{\pi} \int_0^{+\infty} z^{p-1} \log \left( \frac{f(-iz)f(iz)}{g(-iz)g(iz)} \right) dz.
\tag{3}\label{eq3}
\end{equation}
The integrand in Eq.~(\ref{eq3}) may be complex valued, although its left-hand side is necessarily real. In view of the fact that the real part of $\log z$ is $\log |z|$, we can rewrite Eq.~(\ref{eq3}) as
\begin{equation}
\mathcal{E}_p(G_1) - \mathcal{E}_p(G_2) = \frac{p \sin \left( \frac{p\pi}{2} \right)}{\pi} \int_0^{+\infty} z^{p-1} \log \left| \frac{f(-iz)f(iz)}{g(-iz)g(iz)} \right| dz.
\tag{4}\label{eq4}
\end{equation}
Moreover, since the coefficients of $f(z)$ and $g(z)$ are real, we have \(f(iz)f(-iz)=|f(iz)|^2\) for \(z \in \mathbb{R}\). Hence,
\begin{equation}
\mathcal{E}_p(G_1) - \mathcal{E}_p(G_2) = \frac{2p \sin \left( \frac{p\pi}{2} \right)}{\pi} \int_0^{+\infty} z^{p-1} \log \left| \frac{f(iz)}{g(iz)} \right| dz,
\tag{5}\label{eq5}
\end{equation}
proving Proposition \ref{new1}. \end{proof}

\begin{remark}
When \( p = 1 \), Proposition \ref{new1} is equivalent to the famous \textit{Coulson–Jacobs formula} \cite{CoulsonJacobs1949}:
\begin{equation}
\mathcal{E}(G_1) - \mathcal{E}(G_2) = \frac{1}{\pi} \int_{-\infty}^{+\infty} \log \left| \frac{\phi(G_1, ix)}{\phi(G_2, ix)} \right| dx,
\tag{6}\label{eq6}
\end{equation}
where \( \phi(G_1, x) \) and \( \phi(G_2, x) \) are the characteristic polynomials of the adjacency matrices of graphs \( G_1 \) and \( G_2 \), respectively. This is the reason why our Proposition \ref{new1} is named the Coulson-Jacobs-type formula for the $p$-energy.
\end{remark}

\subsection{Upper Bounds for \(p\)-energy when \(p > 2\)}\label{subsec22}

We begin by recalling an upper bound on the spectral radius of connected graphs proved in Hong \cite{Hong1988}:

\begin{thm}[\cite{Hong1988}, Theorem 1]\label{lemmahong}
Let \( G \) be a connected simple graph with \( m \) edges and \( n \) vertices. Then the spectral radius \(\rho(A)\) of the adjacency matrix \( A \) of a graph \( G \) satisfies
\[
\rho(A) \leq \sqrt{2m - n + 1},
\]
with equality if and only if \( G \) is isomorphic to one of the following two graphs:
\begin{itemize}
    \item[(a)] the star \( S_n \),
    \item[(b)] the complete graph \( K_n \).
\end{itemize}
\end{thm}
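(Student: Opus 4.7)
The plan is to bound $\rho = \rho(A(G))$ via the Perron eigenvector of $A(G)$ together with a walk-counting identity, and then trace back each step to analyze the equality case. Since $G$ is connected, $A = A(G)$ is irreducible, so by Perron--Frobenius there is a strictly positive eigenvector $\mathbf{x}$ with $A\mathbf{x} = \rho \mathbf{x}$, which I would normalize so that $x_v = \max_i x_i = 1$ at some vertex $v$.

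I would first derive the inequality. From $\rho^2 x_v = (A^2\mathbf{x})_v$, and the fact that $(A^2)_{vu}$ counts the number $N_2(v,u)$ of walks of length $2$ from $v$ to $u$, the bound $x_u \leq 1$ gives
\[
\rho^2 \;\leq\; \sum_{u \in V} N_2(v,u) \;=\; \sum_{w\sim v} d_w,
\]
the last equality classifying length-$2$ walks from $v$ by their middle vertex. Using $\sum_u d_u = 2m$ and partitioning $V = \{v\} \sqcup N(v) \sqcup (V \setminus (N(v)\cup\{v\}))$,
\[
\sum_{w\sim v} d_w \;=\; 2m - d_v - \sum_{w\notin N(v)\cup\{v\}} d_w \;\leq\; 2m - d_v - (n-1-d_v) \;=\; 2m - n + 1,
\]
since connectivity forces $d_w \geq 1$ for each of the $n-1-d_v$ vertices $w$ outside $N(v)\cup\{v\}$.

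For the equality analysis, I would trace each step backwards: equality requires \textbf{(A)} $x_u = 1$ whenever $N_2(v,u) > 0$, and \textbf{(B)} $d_u = 1$ for every $u \notin N(v) \cup \{v\}$. If some such $u$ exists, then by (B) it is a pendant, and connectivity forces its unique neighbor $u'$ into $N(v)$. Since $u$ is then at walk-distance $2$ from $v$, (A) gives $x_u = 1$, and the Perron equation at $u$ yields $x_{u'} = \rho x_u = \rho$. But $\rho > 1$ for any connected graph on $n \geq 3$ vertices, contradicting $x_{u'} \leq 1$. Hence this sub-case cannot occur beyond $n = 2$, where $S_2 = K_2$.

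The main obstacle is the remaining case $d_v = n-1$, where $v$ is universal. Let $L \subseteq N(v)$ denote the pendants of $G$ attached to $v$ alone, and set $k = |N(v) \setminus L|$. For $u \in N(v) \setminus L$, some neighbor $y$ of $u$ lies in $N(v)\setminus\{u\}$, so $y$ is a common neighbor of $v$ and $u$ and hence $N_2(v,u) > 0$; thus (A) forces $x_u = 1$, and the Perron equation at $u$ then reads $\rho = 1 + (d_u - 1)$, forcing $d_u = \rho$. Consequently $G[N(v) \setminus L]$ is $(\rho - 1)$-regular on $k$ vertices, so $\rho \leq k$. On the other hand, the Perron equation at $v$ rearranges to $\rho - k = (n - 1 - k)/\rho \geq 0$, whence $\rho \leq k$ forces $k = 0$ or $k = n - 1$; these yield $G = S_n$ and $G = K_n$, respectively. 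The delicate point is precisely this reconciliation between the regularity constraint $\rho \leq k$ and the quadratic relation from the Perron equation at $v$; everything else reduces to Perron--Frobenius and routine walk counting.
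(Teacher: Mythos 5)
The paper does not prove this statement at all --- it is quoted verbatim from Hong (\cite{Hong1988}, Theorem 1) and used as a black box --- so there is no internal proof to compare yours against; I can only judge your argument on its own merits, and it is correct. The bound $\rho^2=(A^2\mathbf{x})_v\le\sum_u (A^2)_{vu}=\sum_{w\sim v}d_w\le 2m-d_v-(n-1-d_v)$ at a vertex of maximal Perron entry is sound, and the equality analysis (condition (A) forcing $x_u=1$ at every vertex reachable from $v$ in two steps, condition (B) forcing vertices outside $N(v)\cup\{v\}$ to be pendants, the contradiction $x_{u'}=\rho>1$ eliminating such vertices for $n\ge 3$, and the final reconciliation of $\rho\le k$ with $\rho-k=(n-1-k)/\rho\ge 0$) does pin down $S_n$ and $K_n$. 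Three small points to tighten in the write-up: (i) the concluding case split should read ``either $k=0$, giving $S_n$, or $k\ge 1$, in which case the $(\rho-1)$-regularity of $G[N(v)\setminus L]$ gives $\rho\le k$, which with $\rho\ge k$ forces $\rho=k$ and $n-1-k=0$, hence $K_n$'' --- as phrased, ``$\rho\le k$ forces $k=0$ or $k=n-1$'' is slightly off, since the regularity argument that produces $\rho\le k$ is vacuous when $k=0$; (ii) state explicitly that the Perron equation at a pendant $w\in L$ yields $x_w=1/\rho$, which is what turns the equation at $v$ into $\rho-k=(n-1-k)/\rho$, and that every neighbor $w\ne v$ of $u\in N(v)\setminus L$ has $d_w\ge 2$, hence lies in $N(v)\setminus L$ and has $x_w=1$, which is what justifies $\rho=1+(d_u-1)$; (iii) the ``if'' direction of the equality statement, namely that $S_n$ and $K_n$ actually attain $\sqrt{2m-n+1}$, is a one-line check ($\rho(S_n)=\sqrt{n-1}$, $\rho(K_n)=n-1$) that should be recorded to complete the ``if and only if''.
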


By Theorem \ref{lemmahong}, we can derive the following upper bounds on \(\mathcal{E}_p(G)\) when \(p > 2\).

\begin{prop}\label{new2}
Let \( p > 2 \) and \( G \) be a connected graph with \(m\) edges and \( n \) vertices. Then
\begin{equation}
  \mathcal{E}_p(G) = \| G \|_{p}^p \leq 2m \left( 2m - n + 1 \right)^{\frac{p-2}{2}},\tag{7}\label{eq7}
\end{equation}
with equality if and only if \( G \) is the star \( S_n \).
\end{prop}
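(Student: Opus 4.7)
The plan is to chain together two standard facts. First, since the trace of $A(G)^2$ equals $2m$, we have $\sum_{i=1}^n \lambda_i^2 = 2m$. Second, Theorem~\ref{lemmahong} gives $\rho(A) \leq \sqrt{2m - n + 1}$. Writing $\rho = \rho(A)$, I would observe that because $p > 2$, the function $t \mapsto t^{p-2}$ is nondecreasing on $[0,\infty)$, so $|\lambda_i|^{p-2} \leq \rho^{p-2}$ for every $i$.

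Then I would factor and bound as follows:
\[
\mathcal{E}_p(G) \;=\; \sum_{i=1}^n |\lambda_i|^p \;=\; \sum_{i=1}^n |\lambda_i|^{p-2}\,\lambda_i^2 \;\leq\; \rho^{p-2} \sum_{i=1}^n \lambda_i^2 \;=\; 2m\,\rho^{p-2} \;\leq\; 2m\,(2m-n+1)^{(p-2)/2},
\]
which is exactly the claimed bound~\eqref{eq7}.

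For the equality discussion, both inequalities must be tight. The first is tight if and only if $|\lambda_i| \in \{0,\rho\}$ for every $i$, i.e.\ every nonzero eigenvalue of $G$ has modulus equal to $\rho$. The second, by Theorem~\ref{lemmahong}, is tight if and only if $G \cong S_n$ or $G \cong K_n$. To conclude I would check these two candidates: for $S_n$, the spectrum is $\{\sqrt{n-1},-\sqrt{n-1},0,\ldots,0\}$, so the first equality condition is automatic and a direct substitution confirms that $2m(2m-n+1)^{(p-2)/2} = 2(n-1)^{p/2} = \mathcal{E}_p(S_n)$. For $K_n$ with $n \geq 3$, the spectrum is $\{n-1,-1,\ldots,-1\}$, whose nonzero eigenvalues do not all share the same modulus, violating the first equality condition; the only overlap is the trivial case $n=2$, where $K_2 = S_2$. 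Hence equality holds if and only if $G \cong S_n$.

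The argument is short and the only potential pitfall is the equality analysis, where one must be careful to combine the tightness conditions of both inequalities rather than just invoke Hong's theorem in isolation; that combined check is what rules out $K_n$ and pins the extremal graph down to $S_n$.
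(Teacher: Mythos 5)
Your proof is correct and follows essentially the same route as the paper: factor $|\lambda_i|^p = |\lambda_i|^{p-2}\lambda_i^2$, bound by $\rho^{p-2}\cdot 2m$, apply Hong's bound from Theorem~\ref{lemmahong}, and settle equality by noting that all nonzero eigenvalues must share modulus $\rho$, which rules out $K_n$ for $n\geq 3$. The only cosmetic difference is that the paper phrases the argument in terms of singular values rather than eigenvalues, which is equivalent here since $A(G)$ is symmetric.
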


\begin{proof}
Let $\sigma_1(G) \geq \sigma_2(G) \geq \dots \geq \sigma_n(G)$ be the singular values of \(A(G)\). Since \( G \) is a graph of size \( m \), we have
\begin{equation}
\sigma_1^2(G) + \cdots + \sigma_n^2(G) = 2m.\tag{8}\label{eq8}
\end{equation}
We want to maximize \( \mathcal{E}_p(G) \) subject to (\ref{eq8}). First, note that
\[
\sigma_1^p(G) + \cdots + \sigma_n^p(G) \leq \sigma_1^{p-2}(G) (\sigma_1^2(G) + \cdots + \sigma_n^2(G)) = 2m\cdot\sigma_1^{p-2}(G).
\]
On the other hand, Theorem \ref{lemmahong} implies that
\[
\sigma_1(G) = \rho(G) \leq \sqrt{2m - n + 1},
\]
and (\ref{eq7}) follows.

If equality holds in (\ref{eq7}), then \(\sigma_1(G) = \sqrt{2m - n + 1}\) and \(\sigma_1^{p-2}(G)\sigma_k^2(G)=\sigma_k^p(G)\) holds for \(1\leq k \leq n\). Thus, by Theorem \ref{lemmahong}, we conclude that \(G\) can be either the star \(S_n\) or the complete graph $K_n$. However, since all singular values satisfy $\sigma_i(G)\ge 0$, it follows that either $\sigma_k(G) = \sigma_1(G)$, or $\sigma_k(G) = 0$. For a complete graph \( K_n \), its singular values are \( n-1 \) with multiplicity \( 1 \) and \( 1 \) with multiplicity \( n-1 \), which does not satisfy the condition above when \( n \geq 3 \). Therefore, the only possible graph \( G \) is the star graph \( S_n \), which proves Proposition \ref{new2}.
\end{proof}

\begin{remark}
If we remove the condition that \( G \) is connected, for a general simple graph, Nikiforov \cite[Proposition 4.26]{Nikiforov2016} provides the following upper bounds on \(\mathcal{E}_p(G)\) for \(p>2\): \begin{equation}
\mathcal{E}_p(G) = \|G\|_p^p < 2m \left( -\frac{1}{2} + \sqrt{2m + \frac{1}{4}} \right)^{p-2}.
\tag{9}\label{eq9}
\end{equation}
When \( G \) is a connected graph, our Theorem \ref{new2} is sharper than \eqref{eq9}. To verify this, we only need to check that \(
2m \left( 2m - n + 1 \right)^{\frac{p-2}{2}} \leq 2m \left( -\frac{1}{2} + \sqrt{2m + \frac{1}{4}} \right)^{p-2},
\) which is equivalent to \( m \leq \frac{n(n-1)}{2} \).
\end{remark}

As a direct corollary of Proposition \ref{new2}, we establish an upper bound for \( \mathcal{E}_4(G) \), which will play a crucial role in deriving the key claim in the next section.

\begin{cor}\label{corr}
Let \( G \) be a connected graph with \(m\) edges and \( n \) vertices. Then
\[
  \mathcal{E}_4(G) \leq 2m \left( 2m - n + 1 \right),
\]
with equality if and only if \( G \) is the star \( S_n \).
\end{cor}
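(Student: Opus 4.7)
The plan is to specialize Proposition \ref{new2} to $p = 4$, which yields the corollary immediately. Since $p = 4 > 2$ lies in the range of Proposition \ref{new2}, substituting gives
\[
\mathcal{E}_4(G) \leq 2m(2m - n + 1)^{(4-2)/2} = 2m(2m - n + 1),
\]
which is exactly the claimed upper bound, and the equality characterization (equality iff $G \cong S_n$) transfers verbatim. Strictly speaking, this is the entire proof; no further work is needed, which is why the result is stated as a corollary rather than a theorem.

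For readers who prefer a self-contained derivation, the underlying argument can be inlined in a particularly transparent way at $p = 4$. Writing $\mathcal{E}_4(G) = \sum_{i=1}^n \sigma_i(G)^4$ and using the elementary bound $\sigma_i(G)^4 \leq \sigma_1(G)^2 \cdot \sigma_i(G)^2$, one obtains
\[
\mathcal{E}_4(G) \leq \sigma_1(G)^2 \sum_{i=1}^n \sigma_i(G)^2 = 2m \cdot \rho(G)^2,
\]
where $\sum_i \sigma_i(G)^2 = \mathrm{tr}(A(G)^2) = 2m$ and $\sigma_1(G) = \rho(G)$. Hong's bound (Theorem \ref{lemmahong}) then gives $\rho(G)^2 \leq 2m - n + 1$, closing the estimate.

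The only step meriting attention is the equality analysis, and it is already handled inside the proof of Proposition \ref{new2}: equality forces both $\rho(G) = \sqrt{2m - n + 1}$ (so $G \in \{S_n, K_n\}$ by Hong) and each $\sigma_i$ to be either $0$ or $\sigma_1$. The latter condition fails for $K_n$ with $n \geq 3$, whose singular-value multiset is $\{n-1, 1, 1, \ldots, 1\}$, while it is clearly satisfied for $S_n$ (whose nonzero eigenvalues are $\pm \sqrt{n-1}$). Thus $S_n$ is the unique extremal graph. Since everything reduces to the already-proved Proposition \ref{new2}, no genuine obstacle arises.
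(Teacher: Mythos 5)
Your proposal is correct and matches the paper exactly: the corollary is simply Proposition \ref{new2} specialized to \( p = 4 \), where the exponent \( \frac{p-2}{2} \) becomes \( 1 \), and the equality characterization carries over unchanged. The inlined derivation you give is the same argument used in the paper's proof of Proposition \ref{new2}, so no further comment is needed.
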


\section{Proof of Theorem \ref{mainthm}}\label{sec3}

We are now ready to present the following:
\begin{proof}[Proof of Theorem \ref{mainthm}] Let \(0<p<2\). It suffices to prove that for any connected graph \( G \), we have \( \mathcal{E}_p(G) - \mathcal{E}_p(S_n) \geq 0 \). Given that the characteristic polynomial of \( S_n \) is \( g(z) = z^n - (n-1)z^{n-2} \), Proposition \ref{new1} provides the expression:
\[
\mathcal{E}_p(G) - \mathcal{E}_p(S_n) =\frac{2p \sin \left( \frac{p\pi}{2} \right)}{\pi}\int_0^{+\infty} z^{p-1}  \log \left| \frac{f(iz)}{z^{n-2}(z^2+n-1)} \right| dz.
\] Let \[I = \int_0^{+\infty} z^{p-1}  \log \left| \frac{f(iz)}{z^{n-2}(z^2+n-1)} \right| dz. \]
What remains is to prove that \(I \geq 0\).

Now we claim that \(|f(ix)| \geq |x^{n-2}(x^2+n-1)|\) for \(x \geq 0\).

Let \( \lambda_1, \dots, \lambda_n \) be the eigenvalues of \( A(G) \). Then \( f(x) = \prod_{k=1}^n (x - \lambda_k) \), and \( f(ix) = \prod_{k=1}^n (ix - \lambda_k) \). Since \( \lambda_k \in \mathbb{R} \), we have \begin{align*}
|f(ix)|^2 &= \prod_{k=1}^n |ix - \lambda_k|^2 = \prod_{k=1}^n (x^2 + \lambda_k^2) \\&\geq x^{2n} + \left( \sum_{k=1}^n \lambda_k^2 \right) x^{2n-2} + \left( \sum_{1 \leq i < j \leq n} \lambda_i^2 \lambda_j^2 \right) x^{2n-4}.
\tag{10}\label{eq10}
\end{align*}
It is known that \( \sum_{k=1}^n \lambda_k^2 = 2m \) and \( m \geq n-1 \) for connected graphs, thus
\begin{align*}
|f(ix)|^2 \geq x^{2n} + 2(n-1) x^{2n-2} + \left( \sum_{1 \leq i < j \leq n} \lambda_i^2 \lambda_j^2 \right) x^{2n-4}.
\tag{11}\label{eq11}
\end{align*}
Moreover, since \( \left( \sum_{k=1}^n \lambda_k^2 \right)^2 = \sum_{k=1}^n \lambda_k^4 + 2 \sum_{1 \leq i < j \leq n} \lambda_i^2 \lambda_j^2 \), we have
\begin{align*}
\sum_{1 \leq i < j \leq n} \lambda_i^2 \lambda_j^2 = \frac{1}{2} \left( 4m^2 - \sum_{k=1}^n \lambda_k^4 \right) = 2m^2 - \frac{1}{2}\mathcal{E}_4(G).
\tag{12}\label{eq12}
\end{align*}
By Corollary \ref{corr}, we know \( \mathcal{E}_4(G) \leq 2m (2m - n + 1) \), hence
\begin{align*}
\sum_{1 \leq i < j \leq n} \lambda_i^2 \lambda_j^2 &\geq 2m^2 - m(2m - n + 1) \\&= m(n-1) \geq (n-1)^2.
\tag{13}\label{eq13}
\end{align*}
Therefore, substituting equation (\ref{eq13}) into equation (\ref{eq11}), we obtain
\begin{align*}
|f(ix)|^2 &\geq x^{2n} + 2(n-1)x^{2n-2} + (n-1)^2x^{2n-4} \\&= |x^{n-2}(x^2+n-1)|^2,
\end{align*} which proves our claim that \(|f(ix)| \geq |x^{n-2}(x^2+n-1)|\).

Now we know that \(\log \left| \frac{f(iz)}{z^{n-2}(z^2+n-1)} \right| \geq 0\) for \(z \geq 0\), hence the integral
\begin{align*}
I = \int_0^{+\infty} z^{p-1}  \log \left| \frac{f(iz)}{z^{n-2}(z^2+n-1)} \right| dz \geq 0 \tag{14}\label{eq14}
\end{align*}
holds for any connected graph \(G\). Therefore, we have proved the inequality in Theorem \ref{mainthm}, namely, \begin{align*} \mathcal{E}_p(G) \geq \mathcal{E}_p(S_n). \tag{15}\label{eq15}
\end{align*}

If the equality holds in (\ref{eq15}), then \(\log \left| \frac{f(iz)}{z^{n-2}(z^2+n-1)} \right| = 0\) for \(z \geq 0\), which means \(|f(ix)| = |x^{n-2}(x^2+n-1)|\) for \(x \geq 0\), hence \( \mathcal{E}_4(G) = 2m (2m - n + 1) \). From the equality condition in Corollary \ref{corr}, we conclude that \( G \) can only be the star graph \( S_n \). This completes the proof of Theorem \ref{mainthm}. \end{proof}

\section{Concluding Remarks}\label{sec4}

In this paper, we resolved a conjecture proposed by Nikiforov (Conjecture \ref{conj1} (b)) on the minimal \( p \)-energy of connected graphs, by using the Coulson-Jacobs-type formula. We established that for \( 0 < p < 2 \), the minimal \( p \)-energy among all connected graphs is attained only by the star graph \( S_n \).

A natural question arises: can the same approach be used to address Conjecture \ref{conj1} (a) for \( p > 2 \)? To explore this, let us first examine the form of the Coulson-type integral formula for the general energy when \( \alpha \geq 2  \).
\begin{thm}[\cite{Du2021}, Theorem 4.1 (iii)]\label{Du2}
Let \( \phi(z) \) be a complex polynomial of degree \( n \), whose roots are all real numbers, \( c < n \) the multiplicity of 0 as a root of \( \phi(z) \). If \( \alpha \geq 2 \), then for any positive even integer \( r > \alpha \),
\[
    E_{\alpha}(\phi) = \frac{r \sin \frac{\alpha \pi}{r}}{2 \alpha \pi}
    \int_0^{+\infty}
    \left( 2n + \mathrm{i}x^{\frac{r}{2\alpha}}
    \left( \frac{\psi'(-\mathrm{i}x^{\frac{r}{2\alpha}})}{\psi(-\mathrm{i}x^{\frac{r}{2\alpha}})}
    - \frac{\psi'(\mathrm{i}x^{\frac{r}{2\alpha}})}{\psi(\mathrm{i}x^{\frac{r}{2\alpha}})} \right)
    \right) dx,
\] where \[\psi(z) = e^{\mathrm{i} (\frac{r}{2} - 1) n\pi}
    \phi ( z^{\frac{2}{r}} )
    \phi ( z^{\frac{2}{r}} e^{-\mathrm{i} \frac{4\pi}{r}} )
    \cdots
    \phi ( z^{\frac{2}{r}} e^{-\mathrm{i} \frac{4(r/2-1) \pi}{r}} ).\]
\end{thm}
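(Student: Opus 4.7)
The plan is to reduce Theorem \ref{Du2} to Theorem \ref{Du1} by introducing an auxiliary polynomial $\psi$ of degree $n$ whose nonzero roots are the $(r/2)$-th powers of the nonzero roots of $\phi$, and then applying Theorem \ref{Du1} to $\psi$ with the \emph{smaller} exponent $\beta := 2\alpha/r$. The hypothesis $r > \alpha \geq 2$ gives $\beta < 2$, so Theorem \ref{Du1} is applicable; the root correspondence will ensure $E_\beta(\psi) = E_\alpha(\phi)$, and the stated formula will emerge by unwinding $\beta$.

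First I would verify that the expression defining $\psi(z)$ is a genuine polynomial in $z$ with real nonnegative roots. Write $\phi(z) = a_0 \prod_{k=1}^n (z - z_k)$ with each $z_k \in \mathbb{R}$, and set $\omega := e^{-i4\pi/r}$, a primitive $(r/2)$-th root of unity (because $r$ is an even integer). For any branch of $u = z^{2/r}$, the identity $\prod_{j=0}^{r/2-1}(u - z_k \omega^j) = u^{r/2} - z_k^{r/2}$ (whose roots are exactly the $(r/2)$-th roots of $z_k^{r/2}$), combined with extracting $\omega^j$ from each factor, gives
$$\prod_{j=0}^{r/2-1}(u \omega^j - z_k) = \omega^{\,r(r-2)/8}\bigl(u^{r/2} - z_k^{r/2}\bigr) = e^{-i\pi(r-2)/2}\bigl(u^{r/2} - z_k^{r/2}\bigr).$$
Taking the product over $k$, substituting $u = z^{2/r}$ so that $u^{r/2} = z$, and absorbing the normalizing factor $e^{i(r/2-1)n\pi}$ (which cancels the accumulated phase because $r/2 - 1 = (r-2)/2$), I get
$$\psi(z) = a_0^{r/2} \prod_{k=1}^n \bigl(z - z_k^{r/2}\bigr).$$
Since $r/2$ is a positive integer and each $z_k$ is real, each root $z_k^{r/2}$ is a nonnegative real number; in particular, $0$ is a root of $\psi$ with the same multiplicity $c < n$ as for $\phi$, so $\psi$ meets the hypotheses of Theorem \ref{Du1}.

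From the explicit factorization above,
$$E_\beta(\psi) = \sum_{z_k \neq 0} \bigl|z_k^{r/2}\bigr|^\beta = \sum_{z_k \neq 0} |z_k|^{r\beta/2} = E_\alpha(\phi),$$
since $\beta = 2\alpha/r$. Applying Theorem \ref{Du1} to $\psi$ with parameter $\beta < 2$ then yields
$$E_\beta(\psi) = \frac{\sin(\beta\pi/2)}{\beta\pi}\int_0^{+\infty}\!\!\left(2n + ix^{1/\beta}\!\left(\frac{\psi'(-ix^{1/\beta})}{\psi(-ix^{1/\beta})} - \frac{\psi'(ix^{1/\beta})}{\psi(ix^{1/\beta})}\right)\right)dx,$$
and substituting $\beta = 2\alpha/r$ (so that $\beta\pi/2 = \alpha\pi/r$ and $1/\beta = r/(2\alpha)$) recovers exactly the displayed formula. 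The main obstacle lies in the first step: although $z^{2/r}$ is multi-valued, the product $\prod_{j=0}^{r/2-1}\phi(z^{2/r} e^{-i4j\pi/r})$ together with the precisely chosen normalizing phase $e^{i(r/2-1)n\pi}$ must be shown to be branch-independent and to produce a genuine polynomial with the predicted root structure. Carefully tracking this phase, and verifying that symmetrizing over the $(r/2)$-th roots of unity really does raise each $z_k$ to the $(r/2)$-th power, is the delicate bookkeeping at the heart of the reduction.
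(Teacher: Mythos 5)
This theorem is quoted in the paper from Du \cite{Du2021} (Theorem 4.1 (iii)) without any proof, so there is no internal argument to compare against; your proposal supplies a proof the paper omits, and it is correct. The reduction to Theorem \ref{Du1} works exactly as you describe: with $\omega = e^{-4\pi i/r}$ a primitive $(r/2)$-th root of unity, the identity $\prod_{j=0}^{r/2-1}(u\omega^j - z_k) = \omega^{r(r-2)/8}\bigl(u^{r/2}-z_k^{r/2}\bigr) = e^{-i\pi(r-2)/2}\bigl(u^{r/2}-z_k^{r/2}\bigr)$ checks out, the $n$-fold accumulated phase $e^{-i\pi n(r-2)/2}$ is precisely cancelled by the normalizing factor $e^{i(r/2-1)n\pi}$, and the result $\psi(z) = a_0^{r/2}\prod_{k=1}^n\bigl(z - z_k^{r/2}\bigr)$ depends only on $u^{r/2}=z$, which settles branch-independence at the same stroke (a change of branch multiplies $u$ by an $(r/2)$-th root of unity and merely permutes the factors). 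Since $r > \alpha$ gives $\beta = 2\alpha/r \in (0,2)$, Theorem \ref{Du1} applies to $\psi$, the root correspondence gives $E_\beta(\psi) = \sum_{z_k\neq 0}|z_k|^{r\beta/2} = E_\alpha(\phi)$ with $0$ a root of $\psi$ of the same multiplicity $c<n$, and unwinding $\beta$ reproduces the displayed formula, including the prefactor $\frac{r\sin(\alpha\pi/r)}{2\alpha\pi}$ and the exponent $x^{r/(2\alpha)}$.

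Two small remarks. First, your claim that each root $z_k^{r/2}$ is \emph{nonnegative} is false when $r/2$ is odd and $z_k < 0$ (e.g.\ $r=6$, $z_k=-1$); fortunately Theorem \ref{Du1} only requires the roots to be real, which always holds, so the slip is inessential. Second, your closing paragraph defers as an ``obstacle'' the very phase-and-branch bookkeeping that your second paragraph has already carried out in full; the argument as written is complete, and that caveat should simply be deleted.
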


Using the same proof technique as in Proposition \ref{new1}, we can easily derive the difference in \( p \)-energy between two connected graphs when \( p > 2 \):

\begin{prop}
Let \( G_1 \) and \( G_2 \) be connected graphs on \( n \) vertices, and let \( p > 2 \). Denote by \( f(z) \) and \( g(z) \) the characteristic polynomials of the adjacency matrices of \( G_1 \) and \( G_2 \), respectively. Let \(r\) be a positive even integer such that \( p < r < 2p \). Then
\[
\mathcal{E}_p(G_1) - \mathcal{E}_p(G_2) = \frac{4p \sin \left( \frac{p\pi}{r} \right)}{r\pi} \int_0^{+\infty} z^{\frac{2p}{r}-1} \log \left| \frac{\tilde{f}(iz)}{\tilde{g}(iz)} \right| dz,
\] where \[\tilde{f}(z) = e^{\mathrm{i} (\frac{r}{2} - 1) n\pi}
    f ( z^{\frac{2}{r}} )
    f ( z^{\frac{2}{r}} e^{-\mathrm{i} \frac{4\pi}{r}} )
    \cdots
    f ( z^{\frac{2}{r}} e^{-\mathrm{i} \frac{4(r/2-1) \pi}{r}} )\] and \[\tilde{g}(z) = e^{\mathrm{i} (\frac{r}{2} - 1) n\pi}
    g ( z^{\frac{2}{r}} )
    g ( z^{\frac{2}{r}} e^{-\mathrm{i} \frac{4\pi}{r}} )
    \cdots
    g ( z^{\frac{2}{r}} e^{-\mathrm{i} \frac{4(r/2-1) \pi}{r}} ).\]
\end{prop}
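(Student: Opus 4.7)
The plan is to replay the proof of Proposition \ref{new1} with Theorem \ref{Du2} in place of Theorem \ref{Du1}. Concretely, I apply Theorem \ref{Du2} with $\alpha = p$ to each of $f$ and $g$; the associated polynomials are exactly $\tilde f$ and $\tilde g$ as defined in the statement. Performing the substitution $z = x^{r/(2p)}$ (so $dx = (2p/r)\,z^{2p/r - 1}\,dz$) converts each integral into
\[
\mathcal{E}_p(G_j) \;=\; \frac{\sin(p\pi/r)}{\pi}\int_0^{+\infty} z^{\frac{2p}{r}-1}\!\left[2n + iz\!\left(\frac{\tilde{h}_j'(-iz)}{\tilde{h}_j(-iz)} - \frac{\tilde{h}_j'(iz)}{\tilde{h}_j(iz)}\right)\right]dz,
\]
where $\tilde h_1 = \tilde f$ and $\tilde h_2 = \tilde g$.

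Next I subtract the two expressions so that the $2n$ terms cancel, and recognize the bracketed combination as $-z\frac{d}{dz}\log[\tilde{h}_j(iz)\tilde{h}_j(-iz)]$. This gives
\[
\mathcal{E}_p(G_1) - \mathcal{E}_p(G_2) \;=\; -\frac{\sin(p\pi/r)}{\pi}\int_0^{+\infty} z^{\frac{2p}{r}}\,\frac{d}{dz}\log\frac{\tilde{f}(iz)\tilde{f}(-iz)}{\tilde{g}(iz)\tilde{g}(-iz)}\,dz.
\]
Integration by parts then produces the factor $\frac{2p}{r}$ from differentiating $z^{2p/r}$ and the desired boundary term $\bigl[z^{2p/r}\log(\cdots)\bigr]_0^{+\infty}$. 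Finally, observing that the roots of $\tilde f$ are the numbers $\lambda_k^{r/2}$ (real, since $r/2\in\mathbb{Z}_{>0}$ and $\lambda_k\in\mathbb{R}$), the polynomial $\tilde f$ has real coefficients; hence $\tilde f(iz)\tilde f(-iz) = |\tilde f(iz)|^2$ for real $z$ (same for $\tilde g$), and the factor $\log\frac{|\tilde f(iz)|^2}{|\tilde g(iz)|^2} = 2\log|\tilde f(iz)/\tilde g(iz)|$ yields the prefactor $\frac{4p\sin(p\pi/r)}{r\pi}$.

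The main obstacle is verifying that the boundary term vanishes at $+\infty$, because the weight $z^{2p/r}$ now satisfies $1 < 2p/r < 2$ rather than being strictly less than $1$ as in Proposition \ref{new1}. The resolution uses the reality of the coefficients of $\tilde f$ and $\tilde g$: writing $\tilde f(iz) = i^n z^n\bigl(1 + c_1/(iz) + c_2/(iz)^2 + \cdots\bigr)$ with real $c_k$, the $z^{-1}$ coefficient of $|\tilde f(iz)|^2/z^{2n}$ vanishes because $c_1/(i) + \overline{c_1/(i)} = 0$. Thus $|\tilde f(iz)|^2/|\tilde g(iz)|^2 = 1 + O(z^{-2})$, so $z^{2p/r}\log(\cdots) = O(z^{2p/r - 2}) \to 0$, using exactly the hypothesis $r > p$. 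The boundary term at $0^+$ is handled by the same L'H\^{o}pital argument as in Proposition \ref{new1}: for any nonzero polynomial $P$, $z^{2p/r}\log|P(z)| \to 0$ as $z\to 0^+$ since $2p/r > 0$.

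Everything else in the argument is a routine repetition of the manipulations in Proposition \ref{new1} (substitution, logarithmic-derivative bookkeeping, integration by parts, taking real parts), so once the infinite boundary contribution is justified by the reality of $\tilde f,\tilde g$ together with $r < 2p$, the stated identity follows.
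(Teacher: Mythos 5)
Your proof is correct and follows exactly the route the paper intends (the paper gives no separate argument, asserting only that the result follows ``using the same proof technique as in Proposition~\ref{new1}'', i.e., apply Theorem~\ref{Du2} with $\alpha=p$, substitute $z=x^{r/(2p)}$, subtract, and integrate by parts). Your handling of the boundary term at infinity is the one place where a blind copy of Proposition~\ref{new1} would need care—since $\tilde{f}$ need not have zero trace—and you resolve it correctly by observing that $\tilde{f}(iz)\tilde{f}(-iz)=\prod_k\bigl(z^2+\lambda_k^{r}\bigr)$ contains no odd powers, so the ratio is $1+O(z^{-2})$ and the hypothesis $r>p$ suffices.
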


It is known (see, e.g. \cite[2.6.7]{Cvetkovic1980}) that the characteristic polynomial of \( P_n \) is given by
\[
g(\lambda) = U_n\left(\frac{\lambda}{2} \right) = \sum_{k=0}^{\lfloor n/2 \rfloor} (-1)^k \binom{n-k}{k} \lambda^{n-2k},
\] where \(U_n(x)\) is the Chebyshev polynomial of the second kind,
\[
U_n(x) = \frac{\sin \left[ (n+1) \arccos x \right]}{\sqrt{1-x^2}}.
\]

Thus, to prove Conjecture \ref{conj1} (a), it suffices to show that for any connected graph \( G \), we have
\[
\mathcal{E}_p(G) - \mathcal{E}_p(P_n) \geq 0,
\]
which is equivalent to proving that
\[
I = \int_0^{+\infty} x^{\frac{2p}{r}-1} \log \left| \frac{\tilde{f}(ix)}{\tilde{g}(ix)} \right| dx \geq 0.
\]

One might attempt to prove that \( |\tilde{f}(iz)| \geq |\tilde{g}(ix)| \) holds for every \( x \geq 0 \), which is equivalent to showing that
\begin{align*}
&\left| f \left( x^{\frac{2}{r}} e^{\mathrm{i} \frac{\pi}{r}} \right)
    f \left( x^{\frac{2}{r}} e^{-\mathrm{i} \frac{3\pi}{r}} \right)
    \cdots
    f \left( x^{\frac{2}{r}} e^{-\mathrm{i} \frac{(2r-5) \pi}{r}} \right) \right|
\\&\geq
\left| g \left( x^{\frac{2}{r}} e^{\mathrm{i} \frac{\pi}{r}} \right)
    g \left( x^{\frac{2}{r}} e^{-\mathrm{i} \frac{3\pi}{r}} \right)
    \cdots
    g \left( x^{\frac{2}{r}} e^{-\mathrm{i} \frac{(2r-5) \pi}{r}} \right) \right|,
\tag{16}\label{eq16}
\end{align*} where \( f \) is the characteristic polynomial of an arbitrary connected graph \( G \).

Unfortunately, this approach proves to be quite challenging. For instance, when the adjacency matrix of \( G \) is singular and \( n \) is even, we have \( |g(0)| = 1 \) but \( |f(0)| = 0 \). As a result, for \( x \) sufficiently close to \( 0 \), inequality (\ref{eq16}) cannot hold. Moreover, when \( n \) is odd, \( 0 \) is a simple zero of \( g(x) \) and \( f(x) \) has a zero at \( 0 \) with order greater than one, then similarly, inequality (\ref{eq16}) also cannot hold for \( x \) sufficiently close to \( 0 \).

Furthermore, it remains unknown whether the equality in Conjecture \ref{conj1} (a) holds exclusively for \( G = P_n \). In addition, the spectral radius bound for connected graphs established by Hong \cite{Hong1988} is unlikely to be applicable, as its equality condition is attained by \( S_n \) or \( K_n \), rather than \( P_n \). This suggests that establishing some spectral inequalities where the equality condition holds specifically for \( P_n \) may be a necessary step toward fully resolving this conjecture.

\section*{Acknowledgments}
This work is supported by National Key Research and Development Program of China 2023YFA1010200, National Natural Science Foundation of China
	under grant No.\,12371357.

The authors would like to express their sincere gratitude to Dr.~Clive Elphick for his valuable comments, which have significantly improved an earlier version of this paper.

\end{document}